 \newtheorem{theoA}{Theorem}
\newtheorem{propA}[theoA]{Proposition}
\newtheorem{thm}{Theorem} 
\newtheorem{lem}{Lemma}
\newtheorem{prop}{Proposition} 
\newtheorem{coro}{Corollary}
\newtheorem{rmk}{Remark}
\newtheorem{expl}{Example}
\declaretheoremstyle[notefont=\bfseries,notebraces={}{},%
headpunct={},postheadspace=1em]{mystyle}
\declaretheorem[style=mystyle,numbered=no,name=Theorem]{thm-hand}
\newcommand{\eqnsection}{
  \makeatletter \csname @addtoreset\endcsname{equation}{section}
  \makeatother} \eqnsection
\declaretheoremstyle[notefont=\bfseries,notebraces={}{},%
headpunct={},postheadspace=1em]{mystyle}
 \def\E{{\mathbb{E}}} \def\R{{\mathbb{R}}}
\def\N{{\mathbb{N}}}  \def\demi{{1\over 2}}
\def\ggg{{\mathcal G}}  
\def\Z{{\mathbb Z}}
\def\Q{{\mathbb Q}}
\def\ttt{{\mathcal T}}
\def\mmm{{\mathcal {M}}}
\def\var{{\operatorname{Var}}}
\def\eee{{\mathcal{E}}}
\def\dive{{\operatorname{div}}}
\def\indic{{\bf 1}}
\def\lll{{\mathcal L}}
\newtheorem{lem}{Lemma} \newtheorem{thm}{Theorem}%[section]
\newtheorem{coro}{Corollary}%[section]
\newtheorem{prop}{Proposition}%[section]
\newtheorem{rmk}{Remarks}%[section]
\newtheorem{theoA}{Theorem}
\declaretheoremstyle[notefont=\bfseries,notebraces={}{},%
headpunct={},postheadspace=1em]{mystyle}
\declaretheorem[style=mystyle,numbered=no,name=Theorem]{thm-hand}
\begin{document}
\author[C. Sabot]{Christophe SABOT}
\address{Universit\'e de Lyon, Universit\'e Lyon 1,
Institut Camille Jordan, CNRS UMR 5208, 43, Boulevard du 11 novembre 1918,
69622 Villeurbanne Cedex, France} \email{sabot@math.univ-lyon1.fr}
\title[Polynomial localization of the 2D-VRJP]{Polynomial localization of the 2D-Vertex Reinforced Jump Process}
\begin{abstract}
We prove polynomial decay of the mixing field of the Vertex Reinforced Jump Process (VRJP) on $\Z^2$ with bounded conductances. Using \cite{SZ15} we deduce that the VRJP on $\Z^2$ with any constant conductances is almost surely recurrent. It gives a counterpart of the result of Merkl, Rolles \cite{merkl2009recurrence} and Sabot, Zeng \cite{SZ15} for the 2-dimensional Edge Reinforced Random Walk.
\end{abstract}
\thanks{This work was supported by the LABEX MILYON (ANR-10-LABX-0070) of Universit\'e de Lyon, within the program "Investissements d'Avenir" (ANR-11-IDEX-0007) operated by the French National Research Agency (ANR), and by the ANR/FNS project MALIN (ANR-16-CE93-0003).} 
\maketitle
Let \(\ggg=(V,E)\) be an undirected graph with finite degree at each vertex. We note $i\sim j$ if $\{i,j\}$ is an edge of the graph.
%We write \(i\sim j\) if \(i\in V\), \(j\in V\) and \(\{i,j\}\) is an edge of the graph. We always assume that the graph is connected
%and has no trivial loops (i.e.\ vertex \(i\) such that \(i\sim i\)).
Let \((W_{i,j})_{i\sim j}\) be a set of positive conductances on the edges, \(W_{i,j}>0\), \(W_{i,j}=W_{j,i}\).
The Vertex Reinforced Jump Process (VRJP) is the continuous time process \((Y_s)_{s\ge0}\) on \(V\), starting at time \(0\) at some vertex \(i_0\in V\),
which, conditionally on the past at time \(s\), if \(Y_s=i\),   jumps to a neighbour \(j\) of \(i\) at rate
\[W_{i,j}L_j(s),\]
where
\[L_j(s):=1+\int_0^s \mathds{1}_{\{Y_u=j\}}\,du.\]

The VRJP was introduced by Davis and Volkov and investigated on $\Z$ in \cite{davis2004vertex}, then on trees in \cite{collevecchio2009limit,basdevant2012continuous}. In~\cite{ST15}, Sabot and Tarr\`es proved that this process is closely related to the Edge Reinforced Random Walk (ERRW), and that on any finite graphs, after some some time-change, it is a mixture of Markov jump processes, the mixing law being the first marginal of the supersymmetric hyperbolic sigma field introduced by 
%Zirnbauer [Zirnbauer91], and investigated by 
Disertori, Spencer, Zirnbauer \cite{Zirnbauer91, DSZ06}. Using the exponential localization result of Disertori and Spencer \cite{DS10}, it was proved in~\cite{ST15} that on any graph with bounded degree, there exists a value $\underline W$ such that if $W_{i,j}\le \underline W$ for all $i\sim j$, the VRJP is positive recurrent, i.e. the VRJP visits infinitely often and spends a positive portion of the time on all point (an alternative proof of the localization of the VRJP was given by Angel, Crawford, Kozma in~\cite{angel2014localization} using the representation as a mixture proved in~\cite{ST15}). Using the delocalization result of Disertori, Spencer, Zirnbauer~\cite{DSZ06}, a phase transition was proved on $\Z^d$, $d\ge 3$: there exists $\overline W(d)$, such that if $W_{i,j}\ge\overline W(d)$ for all $i\sim j$, the VRJP is transient, and also diffusive for $(W_{i,j})$ constant and large enough (\cite{SZ15}).

Similar results hold for the Edge Reinforced Random Walk (ERRW) (see~\cite{diaconis1980finetti,coppersmith1987random,ST15,angel2014localization,DST14}). Besides, on $\Z^2$, a polynomial localization of the mixing field of the ERRW (the so-called magic formula of Coppersmith and Diaconis) was proved by Merkl and Rolles~\cite{merkl2009recurrence}. By itself, this polynomial localization does not entail recurrence of the ERRW (it was used in \cite{merkl2009recurrence} to prove recurrence of the ERRW on a modification of $\Z^2$ at weak reinforcement). However, together with the representation of the VRJP and ERRW on infinite graphs as mixture of Markov jump processes provided in \cite{STZ15,SZ15}, it allows to prove recurrence of the ERRW on $\Z^2$ for all initial constants weights.  

The aim of this paper is to provide a counterpart to the result of Merkl and Rolles \cite{merkl2009recurrence}, i.e. to prove polynomial decay of the mixing field of the VRJP. As remarked in \cite{SZ15}, it implies recurrence of the VRJP with constant conductances on $\Z^2$, in the sense that any point is a.s. visited infinitely often by the VRJP. The proof is in the spirit of the proof of Merkl and Rolles for the ERRW (and much before in the spirit of the argument of Mac Bryan and Spencer \cite{McBryan-Spencer77} for the $SO(N)$ symmetric ferromagnets), based on a deformation of the field by a deterministic harmonic function.

%The Vertex Reinforced Jump Process (VRJP) is a continuous time self-interacting process defined on any weighted non-directed graph. Let 
\section{Statement of the results}
\subsection{The mixing field of the VRJP}
We first recall how the VRJP can be written as a mixture of Markov jump processes and its relation with the first marginal of the supersymmetric hyperbolic sigma model.

We denote by $\vec{E}$ the set of corresponding directed edges associated with the undirected edges $E$ (i.e. with each edge of $E$ we associated two edges with opposite orientations). We denote
$$\sum_{i\to j} \boldsymbol{\cdot} = \sum_{(i,j)\in \vec{E}} \boldsymbol{\cdot}
$$
the sum on directed edges of the network. For a function $u:V\mapsto \R$ and for $(i, j)\in \vec{E}$, we denote the gradient of $u$ on $(i,j)$ by:
$$
\nabla u_{i,j}:= u_j-u_i.
$$

% Let $i_0$ and $j_0$ be two points in $V$. 
Assume $V$ is finite. We introduce the mixing field of the VRJP. For a fixed set of positive conductances $(W_{i,j})_{\{i,j\}\in E}$, and a vertex $i_0\in V$, we denote by $\Q^W_{i_0}(du)$ the positive measure on $\{(u_i)_{i\in V}, \;\; u_{i_0}=0\}$ defined by
%the mixing field of the VRJP starting at $i_0\in V$ and with initial conductances $(W_{i,j})$, i.e.:
\begin{eqnarray}\label{00}
\Q_{i_0}^W(du)=c_V e^{-\demi\sum_{i\to j} W_{i,j}(e^{\nabla u_{i,j}}-1)} \sqrt{D_{i_0}(W,u)} (\prod_{i\neq i_0} du_i),
\end{eqnarray}
where $c_V=1/\sqrt{2\pi}^{\vert V\vert -1}$, 
%where the field $(u_i)_{i\in V}$ is rooted at $u_{i_0}=0$ 
and
$$
D_{i_0}(W,u)=\sum_{T \in \ttt_{i_0}}\prod_{(i,j)\in T} W_{i,j} e^{\nabla u_{i,j}},
$$
where $ \ttt_{i_0}$ is the set of directed spanning trees oriented towards the root $i_0$.
%, and where 
%$$\sum_{i\to j} \boldsymbol{\cdot} = \sum_{i\in V}\sum_{j \in V, j\sim j} \boldsymbol{\cdot}
%$$
(The choice of directed spanning trees with weights $e^{u_j-u_i}$, instead of $e^{u_i+u_j}$ classically, explains that the integration is with respect to the measure $(\prod_{i\neq i_0} du_i)$, instead of $(\prod_{i\neq i_0} e^{-u_i} du_i)$ classically.) 

%The measure $\Q_{i_0}$ is a probability measure and is the first marginal of the supersymmetric hyperbolic sigma model introduced in \cite{Zirnbauer91,DSZ06} (the fact that $\Q_{i_0}$ is a probability measure is non trivial and was initially proved in \cite{STZ06} by supersymmetric arguments, then in \cite{ST15} by probabilistic arguments and in \cite{STZ15} by direct computation).

The following fact was initially proved in \cite{DSZ06} by supersymmetric arguments, then in \cite{ST15} by probabilistic arguments and in \cite{STZ15} by direct computation.
\begin{theoA}
The measure $\Q^W_{i_0}(du)$ is a probability measure on the set $\{(u_i)_{i\in V}, \;\; u_{i_0}=0\}$.
\end{theoA}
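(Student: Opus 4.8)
The measure is manifestly positive: the constant $c_V$ is positive, the exponential factor is positive, and $D_{i_0}(W,u)$ is a sum of products of positive quantities. The entire content of the statement is therefore that the total mass equals $1$, and the plan is to reduce this to a normalization identity for the conjugate ``random potential'' field. As a preliminary I would rewrite the two nontrivial factors. Pairing the two orientations of each undirected edge turns the exponent into $-\sum_{\{i,j\}\in E} W_{i,j}(\cosh \nabla u_{i,j}-1)$. For $D_{i_0}$, the matrix-tree theorem identifies it with $\det M^{(i_0)}(u)$, where $M$ is the symmetric matrix with $M_{ij}=-W_{i,j}$ for $i\neq j$ and $M_{ii}=\sum_{j\sim i}W_{i,j}e^{\nabla u_{i,j}}$, and $M^{(i_0)}$ deletes the row and column $i_0$; here one passes from the naturally arising non-symmetric weighted Laplacian to the symmetric $M$ by conjugating with $\mathrm{diag}(e^{u_i})$, which leaves the principal minor unchanged.

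The core step is a change of variables from $(u_i)_{i\neq i_0}$ to the potential $(\beta_i)_{i\neq i_0}$ defined by $2\beta_i=\sum_{j\sim i}W_{i,j}e^{\nabla u_{i,j}}=M_{ii}$, together with one extra positive scalar (encoding $G_{i_0i_0}$), so that $\beta$ ranges over the admissible cone $\{H_\beta>0\}$ for $H_\beta=2\,\mathrm{diag}(\beta)-W$. A direct computation of the Jacobian of $u\mapsto\beta$ gives $2^{-(|V|-1)}\det M^{(i_0)}=2^{-(|V|-1)}D_{i_0}(W,u)$, so that the factor $\sqrt{D_{i_0}}$ in $\Q^W_{i_0}$ combines with a factor $\det H_\beta^{-1/2}$ in the pushforward to reproduce exactly the $\beta$-field of \cite{STZ15}. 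This reduces Theorem~A to the statement that the $\beta$-measure has total mass $1$.

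I would prove this normalization by induction on $|V|$, adding one vertex at a time. Adding a vertex $k$ with conductances $(W_{k,m})_m$ borders $H_\beta$ by one row and column, so by the Schur complement $\det H_\beta$ factorizes and the new coordinate can be integrated out by a one-dimensional integral. The prototype is the leaf case on the $u$-side, where the relevant integral is
\begin{equation}
\frac{\sqrt{W}}{\sqrt{2\pi}}\int_\R e^{-W(\cosh v-1)}e^{-v/2}\,dv ;
\end{equation}
the substitution $t=e^v$ turns it into a modified Bessel integral, and using $K_{1/2}(W)=\sqrt{\pi/(2W)}\,e^{-W}$ the whole expression collapses to exactly $1$, reducing the network to $V\setminus\{k\}$ with the identical form. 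Iterating this already settles the case of trees directly in the $u$-variables; the base case of a single vertex is trivial.

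The main obstacle is the bookkeeping around this change of variables in the presence of cycles: verifying that $u\mapsto\beta$ is a bijection onto $\{H_\beta>0\}$ once the extra scalar is adjoined, computing its Jacobian cleanly, and checking that the Schur-complement step in the inductive normalization produces precisely the one-dimensional Gaussian–Bessel integral above with no residual constant. The difficulty is genuine because integrating out an interior vertex does \emph{not} preserve the plain conductance form in the $u$-field itself, which is exactly why I would route the general case through the potential $\beta$, where the inductive step is a transparent rank-one update.
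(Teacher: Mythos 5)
The paper does not prove Theorem A at all: it is quoted as a known result with three proofs cited (supersymmetric in \cite{DSZ06}, probabilistic in \cite{ST15}, direct computation in \cite{STZ15}). Your proposal is essentially a reconstruction of the third route, and the pieces you actually verify are correct: pairing orientations does turn the exponent into $-\sum_{\{i,j\}\in E}W_{i,j}(\cosh\nabla u_{i,j}-1)$; the matrix--tree theorem plus conjugation by $\mathrm{diag}(e^{u_i})$ does give $D_{i_0}(W,u)=\det H_\beta^{(i_0)}$ with $H_\beta=2\,\mathrm{diag}(\beta)-W$ and $2\beta_i=\sum_{j}W_{i,j}e^{\nabla u_{i,j}}$; the Jacobian of $u\mapsto\beta$ is indeed $2^{-(|V|-1)}D_{i_0}(W,u)$; and the leaf integral does collapse to $1$ via $K_{1/2}(W)=\sqrt{\pi/(2W)}\,e^{-W}$, which legitimately settles all trees. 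You are also right on the subtle point that $u\mapsto\beta$ alone lands on the \emph{boundary} of the cone (since $H_\beta e^{u}=0$), which is exactly why the extra scalar at $i_0$ must be adjoined.

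The genuine gap is in the inductive normalization of the $\beta$-measure, precisely the step you defer. For a non-leaf vertex the Schur-complement step does \emph{not} reproduce ``the identical form with no residual constant'': writing $\det H_\beta=(2\beta_k-c)\det H_\beta^{(k)}$ with $c=\langle W_{k\cdot},(H_\beta^{(k)})^{-1}W_{k\cdot}\rangle$ and integrating $e^{-\beta_k}(2\beta_k-c)^{-1/2}$ over $\{2\beta_k>c\}$ leaves a factor $e^{-c/2}$ that is a nontrivial function of all the remaining $\beta$'s, not a linear-exponential weight. Already for two vertices joined by one edge, integrating out $\beta_2$ leaves $e^{-\beta_1-W^2/(4\beta_1)}\beta_1^{-1/2}$, which is no longer a Gamma integrand (its evaluation needs $\int_0^\infty e^{-a\beta-b/\beta}\beta^{-1/2}\,d\beta=\sqrt{\pi/a}\,e^{-2\sqrt{ab}}$). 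So the induction cannot be run on the family of measures as you have set it up; the hypothesis must be strengthened — e.g.\ compute the full Laplace transform $\int e^{-\langle\lambda,\beta\rangle}\nu^W(d\beta)=e^{-\sum_{i\sim j}W_{i,j}(\sqrt{(1+\lambda_i)(1+\lambda_j)}-1)}\prod_i(1+\lambda_i)^{-1/2}$ by induction on $|V|$, or introduce the external-field family as in \cite{STZ15}, for which the one-vertex step does close. That strengthening is where the actual content of the cited proof lives, and as written your inductive step would fail on any graph with a cycle (and even on a single edge once you pass to the $\beta$-coordinates).
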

For simplicity, we will often write $\E^{\Q^W_{i_0}}(\cdot)$ for $\int \cdot\; \Q^W_{i_0}(du)$. The following is a simple consequence of the previous theorem.
\begin{coro}\label{cor1}
For any $i_0, j_0\in V$:
$$
\E^{\Q^W_{i_0}}(e^{u_{j_0}})=1.
$$
\end{coro}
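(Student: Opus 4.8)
The plan is to reduce the claim to the previous theorem applied with $j_0$, rather than $i_0$, as the pinned vertex, by transporting $\Q^W_{i_0}$ onto $\Q^W_{j_0}$ through the global translation $v_i=u_i-u_{j_0}$. In the density~\eqref{00} three pieces appear: the normalizing constant $c_V$, which is independent of the root; the factor $\exp(-\demi\sum_{i\to j}W_{i,j}(e^{\nabla u_{i,j}}-1))$, which depends on $u$ only through the gradients $\nabla u_{i,j}=u_j-u_i$ and is therefore invariant under $u_i\mapsto u_i+c$; and the spanning-tree factor $\sqrt{D_{i_0}(W,u)}$, which is the only term that records the choice of root. The whole argument rests on the transformation rule
\[
D_{j_0}(W,u)=e^{2(u_{j_0}-u_{i_0})}\,D_{i_0}(W,u),
\]
which I would single out as a lemma.

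To establish this rule I would use the re-rooting bijection between $\ttt_{i_0}$ and $\ttt_{j_0}$: two oriented trees are matched when they carry the same underlying undirected tree $\tau$, since each undirected spanning tree admits exactly one orientation toward $i_0$ and exactly one toward $j_0$. Passing from one orientation to the other reverses precisely the edges on the unique $\tau$-path $i_0=p_0,p_1,\dots,p_m=j_0$. Reversing the edge between $p_k$ and $p_{k+1}$ multiplies its weight by $e^{2(u_{p_{k+1}}-u_{p_k})}$, and the product over the path telescopes to $e^{2(u_{j_0}-u_{i_0})}$, a factor independent of $\tau$; summing over trees yields the displayed identity. Equivalently, one may write $D_{i_0}(W,u)=\det M^{(i_0,i_0)}$ for the zero-row-sum matrix $M$ with $M_{ij}=-W_{i,j}e^{u_j-u_i}$ off the diagonal, and note that $(e^{2u_i})_{i\in V}$ spans the left kernel of $M$, so that the diagonal cofactors $D_{i}$ are proportional to $e^{2u_i}$.

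With the lemma in hand the computation is immediate. On the hyperplane $\{u_{i_0}=0\}$ the translation $v_i=u_i-u_{j_0}$ maps onto $\{v_{j_0}=0\}$; it is an affine bijection whose Jacobian has absolute value one, so $\prod_{i\neq i_0}du_i=\prod_{i\neq j_0}dv_i$. Since $u_{i_0}=0$ we have $u_{j_0}=-v_{i_0}$, hence $e^{u_{j_0}}=e^{-v_{i_0}}$; the root-invariant factors are unchanged; and the lemma, applied with the roles of $i_0,j_0$ exchanged and with $v_{j_0}=0$, gives $\sqrt{D_{i_0}(W,u)}=\sqrt{D_{i_0}(W,v)}=e^{v_{i_0}}\sqrt{D_{j_0}(W,v)}$. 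The extra factor $e^{-v_{i_0}}$ coming from $e^{u_{j_0}}$ cancels exactly against the $e^{v_{i_0}}$ produced by the change of root, so that
\[
\E^{\Q^W_{i_0}}(e^{u_{j_0}})=\int_{\{v_{j_0}=0\}} c_V\,e^{-\demi\sum_{i\to j}W_{i,j}(e^{\nabla v_{i,j}}-1)}\sqrt{D_{j_0}(W,v)}\prod_{i\neq j_0}dv_i=\int \Q^W_{j_0}(dv)=1,
\]
the last equality being the previous theorem for the root $j_0$. The one genuinely substantial step is the transformation rule for $D_{i_0}$; once it is available, everything else is the bookkeeping of a measure-preserving shift together with a single cancellation.
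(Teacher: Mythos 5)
Your proof is correct and follows the same route as the paper, which simply asserts the result ``by simple computation, changing from variable $(u_i)$ to $(u_i-u_{j_0})$''; you have filled in the one substantive ingredient behind that computation, namely the re-rooting identity $D_{j_0}(W,u)=e^{2(u_{j_0}-u_{i_0})}D_{i_0}(W,u)$, and both your telescoping-path argument and the matrix-tree/kernel argument for it are valid. (Note that the paper's displayed substitution $\tilde u_i=u_i-u_{i_0}$ is evidently a typo for $u_i-u_{j_0}$, which is exactly the translation you use.)
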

\begin{proof}
By simple computation, changing from variable $(u_i)$ to $(\tilde u_i)=(u_i-u_{i_0})$, we get that
$$
\int e^{u_{j_0}}\Q^W_{i_0}(du)=\int \Q^W_{j_0}(d\tilde u)=1.$$
\end{proof}
The following result relate the mixing field $\Q^W_{i_0}(du)$ with the VRJP and was proved in \cite{ST15}.
\begin{theoA}
After some time change (see \cite{ST15} for details), the VRJP starting from $i_0\in V$ with conductances $(W_{i,j})_{i\sim j}$ is a mixture of Markov jump processes with jump rates 
$\demi W_{i,j}e^{U_j-U_i}$, where $(U_i)_{i\in V}$ is distributed according to $\Q^W_{i_0}(du)$. More precisely, we have the following identity of distributions:
$$
\lll_{i_0}^{VRJP}(\cdot)=\int \lll^{(u)}_{i_0} (\cdot) \Q^W_{i_0}(du),
$$
where $\lll_{i_0}^{VRJP}$ is the law of the (time-changed) VRJP starting from $i_0$ and $\lll^{(u)}_{i_0}$ is the law of the Markov jump process starting from $i_0$ and with jump rate from i to $j\sim i$,
$$
\demi W_{i,j}e^{u_j-u_i}.
$$
\end{theoA}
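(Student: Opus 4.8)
The plan is to verify the asserted identity of laws at the level of the finite-dimensional distributions of trajectories, following the direct-computation philosophy of \cite{STZ15}. Both sides are laws of c\`adl\`ag jump processes on the finite set $V$ started at $i_0$, so it is enough to match the density of an arbitrary finite trajectory. Fix a path $i_0\sim i_1\sim\cdots\sim i_n$ with jump times $0<s_1<\cdots<s_n$ and no jump on $(s_n,s]$. Writing $L_i(u)=1+\int_0^u\mathds{1}_{\{Y_r=i\}}\,dr$ and $\ell_i=L_i(s)$, the infinitesimal description of the VRJP gives the exact density
\[
\prod_{k=1}^n W_{i_{k-1},i_k}\,L_{i_k}(s_k)\;\exp\!\Big(-\int_0^s\sum_{j\sim Y_u}W_{Y_u,j}L_j(u)\,du\Big).
\]
The first simplification I would record is a deterministic identity for the exponent: since $\tfrac{d}{du}\big(L_i(u)L_j(u)\big)=\mathds{1}_{\{Y_u=i\}}L_j(u)+\mathds{1}_{\{Y_u=j\}}L_i(u)$ integrates to $\ell_i\ell_j-1$ on each edge, summing over directed edges yields
\[
\int_0^s\sum_{j\sim Y_u}W_{Y_u,j}L_j(u)\,du=\tfrac12\sum_{i\to j}W_{i,j}\big(\ell_i\ell_j-1\big),
\]
so the survival term depends on the trajectory only through the final local times $(\ell_i)_{i\in V}$ and already has the shape of the Gibbs weight in \eqref{00} (in the classical $e^{u_i+u_j}$ convention).

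The core of the proof is then to introduce the Sabot--Tarr\`es time change and to identify the mixing field. On the finite graph the VRJP is recurrent, so $\ell_i\to\infty$ and the renormalized local times $U_i:=\lim_{s\to\infty}\log(\ell_i/\ell_{i_0})$ are expected to converge; these play the role of the field $(u_i)$, and the time change is designed precisely so that, under the new clock, the target-dependent reinforcement factor $L_j$ is absorbed and the effective jump rate from $i$ to $j$ acquires the stationary form $\demi W_{i,j}e^{U_j-U_i}$. Granting this, conditionally on $(U_i)$ the time-changed process is a Markov jump process with these rates, which is exactly $\lll^{(u)}_{i_0}$; the remaining task is to show that the law of $(U_i)$ is $\Q^W_{i_0}$.

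To pin down the law of $(U_i)$ I would change variables from the jump times to the local-time increments and integrate over all jump sequences compatible with a prescribed profile of local times. Two effects combine: summing the weight $\prod_k W_{i_{k-1},i_k}L_{i_k}(s_k)$ over the combinatorics of visits produces, through the Markov-chain tree theorem (a Cauchy--Binet / matrix-tree computation), the spanning-tree polynomial $D_{i_0}(W,u)=\sum_{T\in\ttt_{i_0}}\prod_{(i,j)\in T}W_{i,j}e^{\nabla u_{i,j}}$; and the Jacobian of the passage to the variables $(u_i)_{i\neq i_0}$, together with a Gaussian integration over the auxiliary time-scale, supplies the square root $\sqrt{D_{i_0}(W,u)}$ and the Lebesgue factor $\prod_{i\neq i_0}du_i$. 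This reproduces the unnormalized density in \eqref{00}, and the constant $c_V=1/\sqrt{2\pi}^{\,|V|-1}$ is then forced by Theorem A, which guarantees that $\Q^W_{i_0}$ has total mass one. The main obstacle is exactly this last bundle: making the time change rigorous, proving almost-sure convergence of the renormalized local times, and matching the combinatorial and Jacobian factors to the determinant and the Gaussian weight. I expect the cleanest route to be to establish the conditional Markov property first --- by a partial-exchangeability (de Finetti) argument in the spirit of \cite{ST15} --- so that identifying the mixing law reduces to checking one-dimensional marginals, where Corollary \ref{cor1} provides the decisive constraint $\E^{\Q^W_{i_0}}(e^{u_{j_0}})=1$.
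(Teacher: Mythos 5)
First, note that the paper does not prove this statement at all: Theorem B is imported from \cite{ST15} (with the direct-computation variant in \cite{STZ15}), so there is no internal proof to compare against. Your opening computations are correct and are indeed the right starting point of that argument: the trajectory density of the VRJP, the deterministic identity $\int_0^s\sum_{j\sim Y_u}W_{Y_u,j}L_j(u)\,du=\tfrac12\sum_{i\to j}W_{i,j}(\ell_i\ell_j-1)$, and the observation that the survival factor then depends only on the terminal local times; the time change you describe (which makes the rate $W_{i,j}L_j/(2L_{Y})$ converge to $\tfrac12 W_{i,j}e^{U_j-U_i}$ when $L_j/L_i\to e^{U_j-U_i}$) is also the one used in \cite{ST15}.

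However, as a proof the proposal has a genuine gap: everything that actually constitutes the theorem --- almost sure existence of the limits $U_i$, the conditional Markov property after the time change, and above all the identification of the law of $(U_i)$ with $\Q^W_{i_0}$ --- is announced rather than carried out, and the one concrete shortcut you propose for the last step does not work. Knowing that the conditional law is Markov does \emph{not} reduce the identification of the mixing measure to one-dimensional marginals (a mixing law on $\R^{V}$ is not determined by its marginals), and Corollary~\ref{cor1} cannot serve as "the decisive constraint": the single moment condition $\E(e^{u_{j_0}})=1$ is satisfied by many measures, and the corollary is itself deduced from Theorem~A about $\Q^W_{i_0}$, so invoking it to prove that the limiting field has law $\Q^W_{i_0}$ is circular. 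The actual identification requires the explicit computation you defer --- summing the weights $\prod_k W_{i_{k-1},i_k}L_{i_k}(s_k)$ over all admissible jump sequences with a prescribed local-time profile (a Markov-chain tree / matrix-tree computation producing $D_{i_0}(W,u)$), followed by the change of variables to $(u_i)_{i\neq i_0}$ whose Jacobian and residual Gaussian integral produce the factor $\sqrt{D_{i_0}(W,u)}$ and the normalization $c_V$. Labelling this "the main obstacle" is accurate, but it means the heart of the theorem is missing; you should either carry out that computation or, as the paper does, cite \cite{ST15,STZ15} for it.
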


\subsection{Main results}
\label{main_results}
 We focus now on the lattice $\Z^2$ and its restriction to finite boxes. We denote by $\ggg_{\Z^2}=(\Z^2, E_{\Z^2})$ the usual $\Z^2$ lattice where $\{i,j\}\in E_{\Z^2}$ if $\vert i-j\vert_{1}=1$. We assume that the lattice is endowed with some positive conductances $(W_{i,j})_{i\sim j}$.

For $N$ a positive integer, we set $V_N:=\Z^2\cap [-N,N]^2$, and denote by $\ggg_N$ the restriction of $\ggg_{\Z^2}$ to $[-N,N]^2$ with wired boundary condition. More precisely, $\ggg_N:=(\tilde V_N, \tilde E_n)$ where $\tilde V_N:= V_N\cup\{\delta_N\}$ and $\tilde E_N$ are obtained by contracting all the vertices of $\Z^2\setminus V_N$ to the single point $\delta_N$ (the edges are obtained as the image of the edges of $\ggg_{\Z^2}$ by this contraction and by removing all the loops created and identifying multiple edges). The graph $\ggg_N$ is naturally endowed with the conductances $(W^N_{e})_{e\in \tilde E_N}$ obtained by this restriction: the conductance of an edge is the sum of the conductances of the edges of $E_{\Z^2}$ mapped to it by the contraction. (See \cite{SZ15}, section~4.1 for details of the construction.) The estimates below are also valid for the free wired boundary condition, but the wired boundary condition is useful for the application to recurrence. We denote by $\Q^N_{i_0}$ the mixing field associated with this graph with conductances $(W^N_{e})_{e\in \tilde E_N}$ and simply by $\Q^N$ when $i_0=0$.

The main theorem proves polynomial decay of some exponential moments of the mixing field under $\Q^N(du)$.
\begin{thm}\label{main_theorem}
Assume that the conductances are uniformly bounded: $W_{i,j}\le \overline W<\infty $ for all $i\sim j$, $i,j\in \Z^2$. Then, for $0<s<1$, there exists $\eta=\eta(\overline W,s)>0$ such that for all $N\in \N$ large enough, for all $y\in V_N$,
$$
\E^{\Q^N}\left(e^{s u_y}\right)\le \vert y\vert ^{-\eta}.
$$
\end{thm}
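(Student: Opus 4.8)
\emph{Strategy.} The plan is to run the McBryan--Spencer/Merkl--Rolles scheme: deform the field by a deterministic function $h$ and trade the exponential prefactor this produces against the ``energy cost'' of the deformation, which in two dimensions is only logarithmic. First I would rewrite the action in its symmetric form by summing the two orientations of each edge,
\[
\sum_{i\to j}W_{i,j}\big(e^{\nabla u_{i,j}}-1\big)=2\sum_{\{i,j\}\in E}W_{i,j}\big(\cosh(\nabla u_{i,j})-1\big)=:2S(u),
\]
so that $\Q^N(du)=c_V\,e^{-S(u)}\sqrt{D_0(W,u)}\prod_{i\neq 0}du_i$. For any $h:\tilde V_N\to\R$ with $h_0=0$, the substitution $u\mapsto u+h$ leaves $\prod_{i\neq 0}du_i$ invariant and gives the exact identity
\[
\E^{\Q^N}\!\big(e^{su_y}\big)=e^{sh_y}\,\E^{\Q^N}\!\big(e^{su_y}R_h\big),\qquad R_h:=e^{S(u)-S(u+h)}\sqrt{\tfrac{D_0(W,u+h)}{D_0(W,u)}}.
\]
Taking $s=0$ shows $\E^{\Q^N}(R_h)=1$, i.e. $R_h$ is the Radon--Nikodym cocycle of the shift; the whole point is that the cost $\E^{\Q^N}(R_h^{q'})^{1/q'}$ will grow strictly more slowly than the prefactor $e^{sh_y}$ (with $h_y<0$) decays.

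\emph{Peeling off the observable.} Since I only control $R_h$ through its moments, I would separate it from $e^{su_y}$ by H\"older: fixing $q\in(1,1/s)$ with conjugate exponent $q'$,
\[
\E^{\Q^N}\!\big(e^{su_y}R_h\big)\le \E^{\Q^N}\!\big(e^{sq\,u_y}\big)^{1/q}\,\E^{\Q^N}\!\big(R_h^{q'}\big)^{1/q'}.
\]
Because $sq<1$, Corollary~\ref{cor1} together with Jensen's inequality (the map $x\mapsto x^{sq}$ is concave on $\R_+$) yields $\E^{\Q^N}(e^{sq\,u_y})\le \big(\E^{\Q^N}(e^{u_y})\big)^{sq}=1$. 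Everything then reduces to a single moment estimate of the form $\E^{\Q^N}(R_h^{q'})\le \exp\!\big(C\,\eee(h)\big)$, where $\eee(h):=\sum_{\{i,j\}\in E}W_{i,j}(\nabla h_{i,j})^2$ is the Dirichlet energy of $h$ and $C=C(\overline W,q')$.

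\emph{Choice of $h$ and conclusion.} Granting this estimate, I would take $h$ to be a discrete version of $x\mapsto -c\log|x|$, frozen to the constant $-c\log|y|$ outside the ball $B(0,|y|)$ and normalised by $h_0=0$. Since $\log|\cdot|$ is harmonic off the origin, $\nabla h$ is of order $c/|x|$ and the energy concentrates in $B(0,|y|)$, so $\eee(h)\asymp c^2\sum_{1\le|x|\le |y|}|x|^{-2}\asymp C'\,c^2\log|y|$, uniformly in $N$. Plugging in, $e^{sh_y}=|y|^{-sc}$ and $\E^{\Q^N}(R_h^{q'})^{1/q'}\le |y|^{C''c^2}$, whence $\E^{\Q^N}(e^{su_y})\le |y|^{-sc+C''c^2}$; choosing $c=c(\overline W,s)$ small enough makes the exponent strictly negative, which is the asserted $\eta$.

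\emph{Main obstacle.} The heart of the argument is the bound $\E^{\Q^N}(R_h^{q'})\le e^{C\eee(h)}$, and this is where $W_{i,j}\le\overline W$ is essential. Writing $S(u)-S(u+h)=-2\sum_{\{i,j\}}W_{i,j}\sinh(\nabla u_{i,j}+\tfrac12\nabla h_{i,j})\sinh(\tfrac12\nabla h_{i,j})$, the dangerous factor is $\sinh(\nabla u_{i,j})$, whose size is comparable to $e^{|\nabla u_{i,j}|}$; its exponential moments under $\Q^N$ are finite only because the $\cosh$-confinement carries the same weight $W_{i,j}$, so the relevant moment stays finite precisely when $\nabla h$ is below a threshold fixed by $\overline W$ and $q'$. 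This forces $c$ small and explains why $\eta$ degenerates as $s\uparrow1$ (then $q'\uparrow\infty$); below the threshold a second-order expansion controls the exponential moment by the quadratic energy $\eee(h)$. The genuinely delicate term is the spanning-tree ratio $\sqrt{D_0(W,u+h)/D_0(W,u)}$: a term-by-term bound over trees costs $\prod_{(i,j)\in T}e^{\nabla h_{i,j}}$, i.e. a sum of $\nabla h$ over all vertices, which is far too large, so one must estimate this ratio in $\Q^N$-expectation through the matrix-tree structure, exploiting that $\nabla h$ is small and square-summable and keeping every constant uniform in $N$. I expect this tree-determinant estimate to be the main technical difficulty.
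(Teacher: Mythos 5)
Your overall strategy coincides with the paper's: a McBryan--Spencer deformation of the field by a deterministic function whose Dirichlet energy grows only logarithmically in $|y|$, combined with H\"older and Corollary~\ref{cor1}. (The choice of profile is an inessential difference: you take a truncated $-c\log|x|$, the paper takes the unit-conductance harmonic potential $v$ between $0$ and $y$, for which $\eee(v,v)=1/R(0,y)\le (c_0\ln|y|)^{-1}$ and $R(0,y)|\nabla v_{i,j}|\le 1$.) However, the proposal leaves unproved exactly the two points on which the whole argument turns, and for the first of them the route you sketch would fail as stated.

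First, the central estimate $\E^{\Q^N}(R_h^{q'})\le e^{C\eee(h)}$ cannot be obtained by ``a second-order expansion,'' because the exponent of $R_h^{q'}$ contains the first-order term $-q'\sum_{\{i,j\}}W_{i,j}\sinh(\nabla u_{i,j})\nabla h_{i,j}$, which does not vanish pointwise and is linear, not quadratic, in $\nabla h$; summed over your logarithmic profile it is of order $c\sum_{1\le|x|\le|y|}|x|^{-1}\asymp c\,|y|$, which would swamp the $\log|y|$ budget. (Its expectation does vanish at $h=0$, since $\E^{\Q^N}(R_{th})\equiv 1$ forces $\tfrac{d}{dt}\E^{\Q^N}(R_{th}^{q'})\big|_{t=0}=0$, but converting that into a uniform quadratic bound along the path $t\mapsto R_{th}^{q'}$ is precisely the missing work.) The paper's device is to introduce a \emph{second} shift $\gamma'=-\gamma(q-1)$ and write $\E^{\Q^\gamma}\bigl((d\Q/d\Q^\gamma)^{q}\bigr)=\E^{\Q^{\gamma'}}\bigl((d\Q/d\Q^\gamma)^{q-1}(d\Q/d\Q^{\gamma'})\bigr)$: in this three-point combination each directed edge carries the weight $(1-\tfrac1q)e^{q\gamma\nabla v_{i,j}}+\tfrac1q-e^{(q-1)\gamma\nabla v_{i,j}}$, whose zeroth- and first-order terms cancel identically, so the residual genuinely quadratic term can be absorbed into a \emph{symmetric} modification $\tilde W_{i,j}=W_{i,j}(1-2q^3\gamma^2|\nabla v_{i,j}|^2)$ of the conductances. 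Second, the spanning-tree ratio, which you explicitly flag as ``the main technical difficulty'' and do not resolve, is disposed of in the paper by an elementary but decisive observation: $\gamma\mapsto\ln D(W,u+\gamma v)$ is convex, its second derivative being the variance of $\sum_{(i,j)\in T}\nabla v_{i,j}$ under the weighted random spanning tree measure; since $(1-\tfrac1q)\gamma+\tfrac1q\gamma'=0$, the determinant factors in the same three-point combination contribute a factor $\le 1$ pointwise, and no expectation estimate on the tree ratio is needed. Without these two ingredients the proposal is a correct plan rather than a proof.
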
 
\begin{rmk} An explicite expression is provided for $\eta$, see~\eqref{explicite}.
\end{rmk}
As stated in Remark~7 of~\cite{SZ15}, such an estimate implies that the VRJP is recurrent on $\Z^2$.
\begin{thm}
On the graph $\Z^2$ with constant conductances on horizontal edges and on vertical edges, the VRJP is recurrent, i.e. almost surely, the VRJP visits infinitely often every point.
\end{thm}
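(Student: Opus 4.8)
The plan is to deduce recurrence from the exponential moment bound of Theorem~\ref{main_theorem} via the mixture representation of the VRJP (the second \hyperref[]{Theorem~B} above), exactly following the argument invoked from Remark~7 of~\cite{SZ15}. The key conceptual point is that, conditionally on the field $(U_i)$ drawn from $\Q^W_{i_0}$, the time-changed VRJP is a reversible Markov jump process on $\ggg_N$ with conductances $c_{i,j}(u)=\demi W_{i,j}e^{u_i+u_j}$ (symmetrizing the jump rates $\demi W_{i,j}e^{u_j-u_i}$ against the reversible measure $e^{-u_i}$, up to a harmless constant). Thus the quenched walk is a random conductance model, and recurrence is governed by effective resistance: the walk on $\Z^2$ is recurrent if and only if the effective resistance between the origin and infinity is infinite almost surely.

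First I would pass to the infinite-volume field. Using the construction of the VRJP and ERRW on infinite graphs as a genuine mixture of Markov jump processes from~\cite{STZ15,SZ15}, the finite-box fields $\Q^N$ with wired boundary converge (this is precisely why the wired boundary condition was singled out as useful for recurrence) to a limiting field $(U_i)_{i\in\Z^2}$ on all of $\Z^2$, and the time-changed VRJP is the annealed law of the quenched conductance walk. The uniform exponential-moment estimate of Theorem~\ref{main_theorem} survives the limit, giving $\E(e^{sU_y})\le |y|^{-\eta}$ for the infinite-volume field, for every $0<s<1$ and a fixed $\eta>0$.

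Next I would bound the expected effective resistance across annuli. The standard strategy (this is the Merkl--Rolles mechanism adapted to the VRJP) is to use a Nash--Williams-type lower bound on resistance through disjoint cutsets. Around the origin, consider the dyadic annuli $A_n=\{2^n\le |x|_\infty<2^{n+1}\}$; each separating circuit $\gamma$ in $A_n$ gives, by the series law, a contribution to the effective resistance from the origin to $\delta_N$ that is at least the sum over $\gamma$ of the edge resistances $r_{i,j}=2 W_{i,j}^{-1}e^{-(u_i+u_j)}$. Since $W_{i,j}\le\overline W$, we have $r_{i,j}\ge \tfrac{2}{\overline W}e^{-(u_i+u_j)}$, and the exponential-moment bound with $s<1$ lets us control $\E(e^{-(u_i+u_j)})$ along the annulus: choosing $s$ near $1$ and using $\E(e^{su_y})\le |y|^{-\eta}$ together with the symmetry of the field forces the typical resistance of a crossing to grow, so that $\sum_n \E(R_n)$ diverges, where $R_n$ is the resistance contributed by annulus $A_n$. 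Summing these lower bounds over $n$ shows the expected resistance from $0$ to infinity is infinite; a Borel--Cantelli or monotone-convergence argument then promotes this to almost-sure infinite resistance, hence recurrence.

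The hard part will be the transition from the \emph{exponential-moment} bound on a single vertex to a \emph{quantitative lower bound on effective resistance along an entire circuit}, because resistance involves a minimum over many crossing paths and the field values $(u_i)$ along a path are strongly correlated. Controlling this requires either a second-moment or concentration input beyond the first-moment estimate of Theorem~\ref{main_theorem}, or a clever use of the positive-association/duality structure of the field; in the Merkl--Rolles and \cite{SZ15} framework this is handled by combining the one-point bound with the harmonic-deformation invariance (which is precisely the tool underlying Theorem~\ref{main_theorem}) to bound sums of $e^{-(u_i+u_j)}$ over cutsets, but making the cutset argument uniform in $N$ and extracting an almost-sure statement is the delicate step. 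I expect the bulk of the work, and the main obstacle, to lie in this resistance estimate rather than in the field convergence, which is essentially supplied by the cited constructions.
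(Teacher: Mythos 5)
Your proposal takes a genuinely different route from the paper, and it has a real gap --- one that the paper's own final remark explicitly warns about. The paper's proof is essentially a citation of Theorem~5 of \cite{SZ15}: there a stationary ergodic field $(\psi(i))_{i\in\Z^2}$ is constructed from the infinite-volume wired limit, with the property that the VRJP is recurrent if and only if $\psi\equiv 0$ almost surely. Since $\psi$ is controlled by limits of $e^{u_y}$ under $\Q^N$, Fatou's lemma together with Theorem~\ref{main_theorem} gives $\E\left(\psi(y)^s\right)\le\liminf_N\E^{\Q^N}\left(e^{su_y}\right)\le \vert y\vert^{-\eta}$, and stationarity forces this $y$-independent quantity to vanish, hence $\psi=0$ and recurrence. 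The crucial feature of this mechanism is that \emph{any} $\eta>0$ suffices.

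Your route via Nash--Williams cannot be closed with the estimate available. The step you defer as ``the hard part'' is the entire proof, and it fails quantitatively: a cutset bound combined with Jensen's inequality applied to $\bigl(\sum_{e\in\Pi_r}c_e\bigr)^{-1}$ yields at best an \emph{infinite expected} effective resistance, but infinite expectation does not imply almost-surely infinite resistance, and neither Borel--Cantelli nor monotone convergence promotes one to the other. To get an almost-sure statement you would need upper-tail control of $\sum_{e\in\Pi_r}e^{u_i+u_j}$ over each circuit, summable in $r$, and the one-point bound $\E\left(e^{su_y}\right)\le\vert y\vert^{-\eta}$ with the exponent $\eta=c_0s^2/(8q^2(\overline W+1))$ of \eqref{explicite} (with $c_0$ near $1/8$, so $\eta$ well below $1$) does not provide it. This is exactly the point of the paper's closing remark: without the extra input of the $\psi$-criterion from \cite{SZ15}, one needs $\eta>1$, which Theorem~\ref{main_theorem} does not deliver; this is also what blocked Merkl and Rolles at strong disorder. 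Two smaller inaccuracies: the reversible measure for the quenched rates $\demi W_{i,j}e^{u_j-u_i}$ is $e^{2u_i}$, not $e^{-u_i}$ (though the conductances $\demi W_{i,j}e^{u_i+u_j}$ you wrote are correct), and the infinite-volume representation of \cite{SZ15} is not the naive weak limit of the finite-volume rates but itself involves the field $\psi$. The repair is to drop the resistance computation and invoke the $\psi$-criterion directly, as the paper does.
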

\begin{rmk} A weaker version of the recurrence was proved for the 2D-VRJP by Bauerschmidt, Helmuth and Swan in~\cite{BHS18}: their result asserts that the expectation of the total time spent at the origin is infinite. Their approach is based on a direct relation between the VRJP at finite time and the full supersymmetric hyperbolic sigma model and by an adaptation of the original Mermin-Wagner argument.
\end{rmk} 
\begin{proof} The proof of the theorem is the same as the proof of the corresponding theorem for the ERRW, see Theorem~5 of~\cite{SZ15}. In \cite{SZ15}, a stationary ergodic function $(\psi(i))_{i\in \Z^2}$ is constructed, which is a.s. equal to 0 if and only if the VRJP is recurrent. The polynomial decay of the mixing field $\E^{\Q^N}\left(e^{su_y}\right)$ implies that the function $\psi$ is equal to $0$ and thus that the VRJP is recurrent.
% It is based on the fact that the VRJP starting from 0 on the graph $\Z^2$ can be written as a mixture of Markov jump processes with jumping rates $\demi W_{i,j} e^{u_j-u_i}$ where $e^{u_i}= G_{\beta}(0,j)\over G_\beta(0,0)}$, where $G_\beta(i,j)=\hat G_\beta(i,j)+{1\over 2\gamma} \psi(i)\psi(j)$
\end{proof}

\section{Proof of Theorem~\ref{main_theorem}}
\subsection{An a priori estimate}
The proof is based on the following Mermin-Wagner type estimate. This estimate is valid for any finite graph $\ggg=(V,E)$ with conductances $(W_{i,j})_{i\sim j}$.
\begin{lem}\label{Lemma1}
Let $i_0$ and $y$ be two distinct vertices. Let $v:V\mapsto \R$ be such that $v(i_0)=0$,
$v(y)=1$.
% and $v\ge 0$.
%$$
%\begin{cases}
%v(0)=0,
%v(y)=1,
%v\ge 0
%\end{cases}
%$$
For $0<s<1$, let $q>1$ be such that $s+{1\over q}=1$. Let $\gamma >0$ be such that 
\begin{eqnarray}\label{0}
q^2\gamma\vert \nabla v_{i,j}\vert \le \demi, \;\;\;Ê\forall  i\sim j \hbox{ in V}.
\end{eqnarray}
Then,
$$
\E^{\Q_{i_0}^W}\left(e^{s u_y}\right)\le e^{-\gamma s+\gamma^2 q^2 \sum_{i\to j} (W_{i,j}+1)\vert \nabla v_{i,j}\vert^2}.
$$
\end{lem}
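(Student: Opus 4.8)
The plan is to run a Mermin--Wagner / McBryan--Spencer deformation: shift the field along a multiple of the interpolating function $v$ and pay the price in Dirichlet energy. Concretely, write the unnormalized density as $P(u)=e^{-\demi\sum_{i\to j}W_{i,j}(e^{\nabla u_{i,j}}-1)}\sqrt{D_{i_0}(W,u)}$, so that $\Q^W_{i_0}(du)=c_VP(u)\prod_{i\neq i_0}du_i$. In the integral $\E^{\Q^W_{i_0}}(e^{su_y})=c_V\int e^{su_y}P(u)\prod du_i$ perform the change of variables $u=w-\gamma v$. Since $v(i_0)=0$ the constraint $u_{i_0}=0$ is preserved, $\prod du_i=\prod dw_i$, and since $v(y)=1$ we have $u_y=w_y-\gamma$, which factors out $e^{-\gamma s}$. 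Introducing the likelihood ratio $R(w):=P(w-\gamma v)/P(w)$ this gives the exact identity
\[
\E^{\Q^W_{i_0}}(e^{su_y})=e^{-\gamma s}\int e^{sw_y}R(w)\,\Q^W_{i_0}(dw).
\]

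Next I would apply H\"older with the conjugate exponents $1/s$ and $q$ (legitimate because $s+\tfrac1q=1$), which yields $\int e^{sw_y}R\,d\Q\le\big(\E^{\Q}(e^{w_y})\big)^{s}\big(\E^{\Q}(R^q)\big)^{1/q}$. By Corollary~\ref{cor1}, $\E^{\Q^W_{i_0}}(e^{w_y})=1$, so the first factor disappears and we are reduced to the moment bound
\[
\E^{\Q^W_{i_0}}(e^{su_y})\le e^{-\gamma s}\big(\E^{\Q^W_{i_0}}(R^q)\big)^{1/q},
\]
whence it suffices to prove $\E^{\Q^W_{i_0}}(R^q)\le\exp\!\big(q\,\gamma^2q^2\sum_{i\to j}(W_{i,j}+1)|\nabla v_{i,j}|^2\big)$. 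This is the heart of the matter.

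To control $\E^{\Q}(R^q)$ I would use the structure of $\log R=\phi(\gamma)-\phi(0)$, where $\phi(t):=\log P(w-tv)$ splits into the Gibbs part and the spanning-tree part. Two convexity facts are available: the Gibbs term $-\demi\sum_{i\to j}W_{i,j}(e^{\nabla(w-tv)_{i,j}}-1)$ is \emph{concave} in $t$ (its second derivative is $-\demi\sum_{i\to j}W_{i,j}e^{\nabla(w-tv)_{i,j}}|\nabla v_{i,j}|^2\le0$), so it is bounded above by its tangent at $t=0$; and $\log D_{i_0}(W,w-tv)$ is a cumulant generating function in $t$, hence \emph{convex}, with second derivative equal to $\var_{\pi_t}\!\big(\sum_{(i,j)\in T}\nabla v_{i,j}\big)$, where $\pi_t$ is the probability measure on spanning trees oriented towards $i_0$ with weights proportional to $\prod_{(i,j)\in T}W_{i,j}e^{\nabla(w-tv)_{i,j}}$. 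A second-order Taylor expansion then writes $\log R\le \Lambda(w)+\text{(a quadratic remainder)}$, where $\Lambda(w):=\gamma\phi'(0)$ collects \emph{both} first-order terms. The decisive structural point is that $\E^{\Q}\Lambda=0$: differentiating at $t=0$ the identity $\int P(w-tv)\prod dw_i\equiv 1/c_V$ (valid for every $t$ by translation invariance together with the fact that $\Q^W_{i_0}$ is a probability measure) gives $\int P(w)\phi'(0)\prod dw_i=0$, i.e.\ $\E^{\Q}\phi'(0)=0$.

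The main obstacle is the final exponential-moment estimate $\E^{\Q}(e^{q\Lambda})\le\exp(\text{quadratic})$, and a clean bound on the tree variance. For the latter I expect $\var_{\pi_t}\!\big(\sum_{(i,j)\in T}\nabla v_{i,j}\big)\le\sum_{i\to j}|\nabla v_{i,j}|^2$ \emph{uniformly} in $w$ and $t$, using that the oriented edge-occupations of a spanning tree are bounded by $1$ and negatively correlated; this is exactly what produces the coefficient "$1$" in $(W_{i,j}+1)$, the Gibbs part producing the "$W_{i,j}$". The genuinely delicate part is $\E^{\Q}(e^{q\Lambda})$: since $\Lambda$ contains the \emph{unbounded} quantities $W_{i,j}e^{\nabla w_{i,j}}$, mean-zero alone is insufficient. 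Here the smallness hypothesis~\eqref{0} enters critically: it forces the coefficient $q\gamma|\nabla v_{i,j}|\le\frac{1}{2q}$ of each $e^{\nabla w_{i,j}}$ to lie strictly below the threshold $W_{i,j}$ at which the super-exponential weight $e^{-\demi W_{i,j}(e^{\nabla w_{i,j}}-1)}$ of $\Q^W_{i_0}$ can still absorb it, keeping all exponential moments finite. Expanding $\E^{\Q}(e^{q\Lambda})$ and using $\E^{\Q}\Lambda=0$ to cancel the first order then leaves a Gaussian-type bound with variance of order $q^2\gamma^2\sum_{i\to j}W_{i,j}|\nabla v_{i,j}|^2$. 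Combining this with the deterministic quadratic remainder and the tree-variance bound yields the desired estimate for $\E^{\Q}(R^q)$, and substituting back completes the proof; the quantitative decay in Theorem~\ref{main_theorem} will then follow by choosing $v$ to be a suitable harmonic interpolation and optimizing over $\gamma$.
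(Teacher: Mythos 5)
Your first half — the change of variables $u = w - \gamma v$, the factorization of $e^{-\gamma s}$, H\"older with exponents $1/s$ and $q$, and the use of Corollary~\ref{cor1} to kill the factor $\E^{\Q}(e^{w_y})^s$ — is exactly the paper's reduction to bounding the $q$-th moment of the likelihood ratio. The gap is in how you then bound $\E^{\Q}(R^q)$. You isolate the first-order term $\Lambda=\gamma\phi'(0)$, observe $\E^{\Q}\Lambda=0$, and claim that ``expanding $\E^{\Q}(e^{q\Lambda})$ and using $\E^{\Q}\Lambda=0$ to cancel the first order leaves a Gaussian-type bound with variance of order $q^2\gamma^2\sum_{i\to j}W_{i,j}\vert\nabla v_{i,j}\vert^2$.'' This step is not a proof and does not follow: for a mean-zero variable $X$ with merely finite exponential moments, $\E(e^{X})\le e^{C\,\var(X)}$ is false in general; one needs a sub-Gaussian (or at least sub-exponential with matched constants) concentration statement, and $\Lambda$ contains the unbounded quantities $q\gamma W_{i,j}\nabla v_{i,j}\,e^{\nabla w_{i,j}}$ whose moments under $\Q$ are correlated across edges and not controlled anywhere in your argument. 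Hypothesis~\eqref{0} guarantees \emph{finiteness} of $\E^{\Q}(e^{q\Lambda})$, as you note, but finiteness plus mean zero gives no quadratic bound. This is precisely the difficulty the paper's proof is engineered to avoid: it introduces a second tilt $\gamma'=-\gamma(q-1)$ and rewrites $\E^{\Q^\gamma}\bigl(({d\Q/d\Q^\gamma})^q\bigr)=\E^{\Q^{\gamma'}}\bigl(({d\Q/d\Q^\gamma})^{q-1}({d\Q/ d\Q^{\gamma'}})\bigr)$; in this symmetrized product the combination $(1-\tfrac1q)e^{q\gamma\nabla v_{i,j}}+\tfrac1q-e^{(q-1)\gamma\nabla v_{i,j}}$ has vanishing zeroth \emph{and} first order, so the dangerous linear terms cancel \emph{pointwise in $u$}, not merely in expectation. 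The surviving quadratic term, which still multiplies $e^{\nabla u_{i,j}+\gamma'\nabla v_{i,j}}$, is then absorbed by shrinking the conductances to $\tilde W_{i,j}=W_{i,j}(1-2q^3\gamma^2\vert\nabla v_{i,j}\vert^2)$ and using that $\Q^{\tilde W}$ is still a normalized probability measure — an absorption mechanism absent from your sketch.

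A secondary issue: your route requires a quantitative upper bound $\var_{\pi_t}\bigl(\sum_{(i,j)\in T}\nabla v_{i,j}\bigr)\le\sum_{i\to j}\vert\nabla v_{i,j}\vert^2$, which you justify by ``negative correlation.'' Pairwise negative correlation of edge indicators does not by itself bound $\var(\sum_e a_e X_e)$ by $\sum_e a_e^2$ when the coefficients $a_e=\nabla v_{i,j}$ have mixed signs (the cross-covariance terms $a_ea_f\,\mathrm{Cov}(X_e,X_f)$ can be positive), so this would need the determinantal structure of weighted spanning trees and a genuine argument. The paper sidesteps this entirely: convexity of $\gamma\mapsto\ln D(W,u+\gamma v)$ is used only qualitatively, via $(1-\tfrac1q)\ln D(W,u+\gamma v)+\tfrac1q\ln D(W,u+\gamma' v)\ge\ln D(W,u)$, so that the tree term contributes with the favorable sign and no variance estimate is needed. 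As written, your proposal establishes the easy half of the lemma and leaves the essential estimate unproved.
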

In order to simplify the notations, we will simply write $\Q(du)$ for $\Q_{i_0}^W(du)$ and $D(W,u)$ for $D_{i_0}(W,u)$.
\begin{proof}
We start by a simple change of variables.
\begin{prop}\label{prop_chang}
For $\gamma\in \R$ we denote by $\Q^\gamma$ the distribution of $\tilde u^\gamma := u-\gamma v$ when $u$ is distributed under $\Q(du)$. We have
$$
{d\Q\over d\Q^\gamma}(u) =e^{\demi\sum_{i\to j} W_{i,j} e^{\nabla u_{i,j}}(e^{\gamma\nabla v_{i,j}}-1)} \sqrt{{D(W,u)\over D(W, u+\gamma v)}},
$$
\end{prop}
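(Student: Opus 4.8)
The statement is an elementary change-of-variables computation, so the plan is to write $\Q$ as an explicit density against the reference measure $\prod_{i\neq i_0}du_i$, recognise $\Q^\gamma$ as the image of $\Q$ under a translation, and divide the two densities.

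Concretely, write $\Q(du)=\rho(u)\prod_{i\neq i_0}du_i$ with
\[
\rho(u)=c_V\,e^{-H(u)}\sqrt{D(W,u)},\qquad H(u)=\demi\sum_{i\to j}W_{i,j}\big(e^{\nabla u_{i,j}}-1\big).
\]
By definition $\Q^\gamma$ is the push-forward of $\Q$ under the map $\Phi:u\mapsto u-\gamma v$. The key observation is that, since $v(i_0)=0$, this map sends the hyperplane $\{u_{i_0}=0\}$ onto itself and acts there as a translation, hence as a bijection with unit Jacobian that preserves $\prod_{i\neq i_0}du_i$. Therefore the density of the image measure is the original density evaluated at the preimage $\Phi^{-1}(w)=w+\gamma v$; that is, $\Q^\gamma(dw)=\rho(w+\gamma v)\prod_{i\neq i_0}dw_i$. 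I would verify this one line by testing against an arbitrary $F$ and substituting $w=u-\gamma v$, which is the only place any care is needed.

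Consequently $\dfrac{d\Q}{d\Q^\gamma}(u)=\dfrac{\rho(u)}{\rho(u+\gamma v)}$, and it only remains to simplify this ratio. The constant $c_V$ cancels, the two square roots combine into $\sqrt{D(W,u)/D(W,u+\gamma v)}$, and the exponential factor is $e^{H(u+\gamma v)-H(u)}$. Using $\nabla(u+\gamma v)_{i,j}=\nabla u_{i,j}+\gamma\nabla v_{i,j}$ and the fact that the additive constants $-1$ cancel in the difference,
\[
H(u+\gamma v)-H(u)=\demi\sum_{i\to j}W_{i,j}\big(e^{\nabla u_{i,j}+\gamma\nabla v_{i,j}}-e^{\nabla u_{i,j}}\big)=\demi\sum_{i\to j}W_{i,j}\,e^{\nabla u_{i,j}}\big(e^{\gamma\nabla v_{i,j}}-1\big),
\]
which is exactly the claimed exponent.

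There is no serious obstacle here: the whole content is the translation-invariance bookkeeping. The two points to get right are \emph{(i)} that $v(i_0)=0$ is what makes $\Phi$ a measure-preserving bijection of the constrained space, so no Jacobian factor appears, and \emph{(ii)} the direction of the shift --- because the density of the image is evaluated at the preimage $w+\gamma v$, it is $D(W,u+\gamma v)$ (and not $D(W,u-\gamma v)$) that appears in the denominator.
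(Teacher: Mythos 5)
Your proof is correct and follows essentially the same route as the paper's: the paper likewise tests against an arbitrary positive function, substitutes $\tilde u = u-\gamma v$ (unit Jacobian on the hyperplane $\{u_{i_0}=0\}$), and reads off the density ratio, which is precisely your pushforward computation. Your explicit remarks on why no Jacobian appears and on the direction of the shift are accurate and consistent with the paper's (more terse) calculation.
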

\begin{proof}
If $\phi$ is a positive test function, by changing from variable $u$ to $\tilde u:=u-\gamma v$,
\begin{eqnarray*}
\int \phi(u-\gamma v) \Q(du)&=&
c_V \int \phi(u-\gamma v) e^{-\demi \sum_{i\to j} W_{i,j} (e^{\nabla u_{i,j}}-1)} \sqrt{{D(W,u)}} du
\\
&=&
c_V \int \phi(\tilde u) e^{-\demi \sum_{i\to j} W_{i,j} (e^{\nabla \tilde u_{i,j}+\gamma \nabla v_{i,j}}-1)} \sqrt{{D(W,\tilde u+\gamma v)}}d\tilde u
\\
&=&
\int \phi(\tilde u)  e^{-\demi\sum_{i\to j} W_{i,j} e^{\nabla \tilde u_{i,j}}(e^{\gamma\nabla v_{i,j}}-1)} \sqrt{{D(W,\tilde u+ \gamma v)\over D(W, \tilde u)}} \Q(d\tilde u)
\end{eqnarray*}

\end{proof}
Let us now prove the Lemma. We have by Corollary~\ref{cor1}
$$
\E^{\Q^\gamma}\left(e^{u_y}\right)=
\E^\Q\left(e^{u_y-\gamma v_y}\right)=e^{-\gamma} \E^\Q\left(e^{u_y}\right)=e^{-\gamma}.
$$
On the other hand, by H\"older inequality,
\begin{eqnarray}\label{1}
\nonumber 
\E^{\Q}\left(e^{s u_y}\right)= \E^{\Q^\gamma}\left({d\Q\over d\Q^\gamma} e^{s u_y}\right)&\le& \E^{\Q^\gamma}\left( \left({d\Q\over d\Q^\gamma}\right)^q \right)^{1/q} \E^{\Q^\gamma}\left(e^{u_y}\right)^s
\\
&\le&
e^{-\gamma s}\E^{\Q^\gamma}\left( \left({d\Q\over d\Q^\gamma}\right)^q \right)^{1/q}  
\end{eqnarray}
(It will be clear later that everything is integrable on the right-hand-side.)

Let us fix $\gamma'$ such that 
$$
\gamma':=-\gamma(q-1).
$$
We have,
\begin{eqnarray}\label{2}
\E^{\Q^\gamma}\left( \left({d\Q\over d\Q^\gamma}\right)^q \right)=\E^{\Q}\left( \left({d\Q\over d\Q^\gamma}\right)^{q-1} \right)=
\E^{\Q^{\gamma'}}\left( \left({d\Q\over d\Q^\gamma}\right)^{q-1} \left( {d\Q\over d\Q^{\gamma'}}\right)\right)
\end{eqnarray}
Then,  by Proposition~\ref{prop_chang}
\begin{eqnarray}
%\nonumber 
\label{3}
&&
\left({d\Q\over d\Q^\gamma}\right)^{q-1} \left( {d\Q\over d\Q^{\gamma'}}\right)(u)
\\
\nonumber
&=&
 e^{\demi\sum_{i\to j} W_{i,j} e^{\nabla u_{i,j}}((q-1)e^{\gamma\nabla v_{i,j}}+e^{\gamma'\nabla v_{i,j}}-q)} {\sqrt{D(W,u)}^q\over \sqrt{ D(W, u+\gamma v)}^{q-1} \sqrt{D(W,u+\gamma' v)}}
\\
\nonumber 
&=&
\exp\left( {q\over 2} \sum_{i\to j} W_{i,j} e^{\nabla u_{i,j}+\gamma' \nabla v_{i,j} }\left((1-{1\over q})e^{q\gamma\nabla v_{i,j}}+{1\over q} - e^{(q-1)\gamma\nabla v_{i,j}}\right)\right)
\\
\nonumber 
&&\cdot \exp\left( {q\over 2} \left(\ln D(W,u)-(1-{1\over q})\ln D(W,u+\gamma v)-{1\over q} \ln D(W,u+\gamma' v)\right)\right).
\end{eqnarray}
Let us consider the first line of the last expression: we make a second order expansion of the term $(1-{1\over q})e^{q\gamma\nabla v_{i,j}}+{1\over q} - e^{(q-1)\gamma\nabla v_{i,j}}$.
The constant term vanishes, and the first order is
$$
(1-{1\over q})q\gamma\nabla v_{i,j}-(q-1)\gamma\nabla v_{i,j}=0
$$
Hence we can bound by Taylor expansion:
\begin{eqnarray}
\nonumber
&&
\left \vert
(1-{1\over q})e^{q\gamma\nabla v_{i,j}}+{1\over q} - e^{(q-1)\gamma\nabla v_{i,j}}
\right\vert
\\
\nonumber
&\le&
\demi (q\gamma\nabla v_{i,j})^2 (1-{1\over q})e^{q\gamma\vert \nabla v_{i,j}\vert} +\demi ((q-1)\gamma\vert \nabla v_{i,j}\vert)^2 e^{(q-1)\gamma\vert \nabla v_{i,j}\vert}
\\
\nonumber
&\le&
q^2\gamma^2\vert \nabla v_{i,j}\vert^2 e^{q\gamma\vert \nabla v_{i,j}\vert}
\\
\label{ineg-1}
&\le&
2 q^2\gamma^2\vert \nabla v_{i,j}\vert^2
\\
\label{ineg-2}
&\le&
\demi
\end{eqnarray}
where \eqref{ineg-1} and \eqref{ineg-2} comes from the fact that $q\gamma\vert \nabla v_{i,j}\vert\le q^2\gamma\vert \nabla v_{i,j}\vert\le \demi$ by assumption \eqref{0}, and that $e^\demi\le 2$.

Concerning the second term we will use the following lemma.
\begin{lem}
The application $\gamma\to \ln D(W,u+\gamma v)$ is convex.
\end{lem}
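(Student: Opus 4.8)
The plan is to use the explicit spanning-tree expansion of $D$, which turns $\gamma\mapsto D(W,u+\gamma v)$ into a partition function, after which convexity is immediate. Since $\nabla(u+\gamma v)_{i,j}=\nabla u_{i,j}+\gamma\nabla v_{i,j}$, every summand in the definition of $D(W,u+\gamma v)$ factorizes as
$$
\prod_{(i,j)\in T}W_{i,j}\,e^{\nabla u_{i,j}+\gamma\nabla v_{i,j}}=a_T\,e^{\gamma\beta_T},
$$
where $a_T:=\prod_{(i,j)\in T}W_{i,j}\,e^{\nabla u_{i,j}}>0$ does not depend on $\gamma$ and $\beta_T:=\sum_{(i,j)\in T}\nabla v_{i,j}\in\R$ is the total $v$-increment along the tree. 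Hence
$$
D(W,u+\gamma v)=\sum_{T\in\ttt_{i_0}}a_T\,e^{\gamma\beta_T},
$$
a positive combination of exponentials in $\gamma$, i.e.\ a log-sum-exp.

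The next step is to prove that $\gamma\mapsto\log\big(\sum_T a_T e^{\gamma\beta_T}\big)$ is convex, which is a textbook fact. I would give the H\"older argument: fix $\gamma_0,\gamma_1\in\R$ and $\lambda\in(0,1)$, and set $\gamma=\lambda\gamma_0+(1-\lambda)\gamma_1$. Applying H\"older's inequality with conjugate exponents $1/\lambda$ and $1/(1-\lambda)$ to the nonnegative sequences $(a_T e^{\gamma_0\beta_T})^{\lambda}$ and $(a_T e^{\gamma_1\beta_T})^{1-\lambda}$ gives
$$
\sum_T a_T e^{\gamma\beta_T}=\sum_T\big(a_T e^{\gamma_0\beta_T}\big)^{\lambda}\big(a_T e^{\gamma_1\beta_T}\big)^{1-\lambda}\le\Big(\sum_T a_T e^{\gamma_0\beta_T}\Big)^{\lambda}\Big(\sum_T a_T e^{\gamma_1\beta_T}\Big)^{1-\lambda}.
$$
Taking logarithms yields $\log D(W,u+\gamma v)\le\lambda\log D(W,u+\gamma_0 v)+(1-\lambda)\log D(W,u+\gamma_1 v)$, which is exactly convexity.

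Alternatively, since $D$ is a finite sum of strictly positive terms, $\gamma\mapsto Z(\gamma):=D(W,u+\gamma v)$ is smooth and positive, so one may differentiate directly: writing $p_T(\gamma):=a_T e^{\gamma\beta_T}/Z(\gamma)$, a short computation gives $(\log Z)''(\gamma)=\sum_T p_T\beta_T^2-\big(\sum_T p_T\beta_T\big)^2$, the variance of $\beta_T$ under the probability weights $(p_T)$, which is $\ge 0$. This route also shows the map is strictly convex unless all $\beta_T$ coincide. There is essentially no real obstacle here: the entire content lies in recognizing the factorization that identifies $D(W,u+\gamma v)$ with a partition function, and both the H\"older and the variance computations then conclude in a single line.
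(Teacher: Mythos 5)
Your proof is correct. The key observation --- that the spanning-tree expansion factorizes each summand as $a_T e^{\gamma\beta_T}$ with $a_T=\prod_{(i,j)\in T}W_{i,j}e^{\nabla u_{i,j}}>0$ and $\beta_T=\sum_{(i,j)\in T}\nabla v_{i,j}$, so that $D(W,u+\gamma v)$ is a partition function in $\gamma$ --- is exactly the structure the paper exploits. Where you diverge is in how you conclude: your primary argument applies H\"older's inequality with exponents $1/\lambda$ and $1/(1-\lambda)$ to get log-convexity of the sum of exponentials directly, whereas the paper differentiates twice and identifies $\frac{\partial^2}{\partial\gamma^2}\ln D(W,u+\gamma v)$ as the variance of $\sum_{(i,j)\in T}\nabla v_{i,j}$ under the probability measure on directed spanning trees with weights proportional to $\prod_{(i,j)\in T}W_{i,j}e^{\nabla u_{i,j}+\gamma\nabla v_{i,j}}$ --- which is precisely your stated ``alternative.'' The H\"older route is marginally more elementary (no differentiation under a finite sum, no need to introduce the random spanning tree measure), while the paper's variance formulation is more structural: it exhibits the second derivative as a variance of a natural observable on random spanning trees, echoing Remark~2.3 of the reference on the sigma model and the analogous ERRW computation. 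Both are complete and either would serve the purpose in the main argument, which only needs convexity to conclude that $(1-\frac{1}{q})\ln D(W,u+\gamma v)+\frac{1}{q}\ln D(W,u+\gamma' v)\ge\ln D(W,u)$.
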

\begin{rmk} The property was already remarked in \cite{DSZ06}, remark~2.3, and a similar statement was proved in the case of the ERRW, see the proof of Lemma~6.2 in \cite{merkl2009recurrence}.\end{rmk}
\begin{proof}
We have
\begin{eqnarray*}
{\partial \over \partial \gamma} \ln D(W,u+\gamma v)&=&
{
\sum_{T \in \ttt_{i_0}}\left(\prod_{(i,j)\in T} W_{i,j} e^{\nabla u_{i,j}+\gamma \nabla v_{i,j}}\right)
\left(\sum_{(i,j)\in T} \nabla v_{i,j}\right)
\over
\sum_{T \in \ttt_{i_0}}\prod_{(i,j)\in T} W_{i,j} e^{\nabla u_{i,j}+\gamma \nabla v_{i,j}}
}
\\
\end{eqnarray*}
\begin{eqnarray*}
{\partial^2 \over \partial \gamma^2} \ln D(W,u+\gamma v)
%\\
&=&
{
\sum_{T \in \ttt_{i_0}}\left(\prod_{(i,j)\in T} W_{i,j} e^{\nabla u_{i,j}+\gamma \nabla v_{i,j}}\right)
\left(\sum_{(i,j)\in T} \nabla v_{i,j}\right)^2
\over
\sum_{T \in \ttt_{i_0}}\prod_{(i,j)\in T} W_{i,j} e^{\nabla u_{i,j}+\gamma \nabla v_{i,j}}
}
\\
&&
-
\left({
\sum_{T \in \ttt_{i_0}}\left(\prod_{(i,j)\in T} W_{i,j} e^{\nabla u_{i,j}+\gamma \nabla v_{i,j}}\right)
\left(\sum_{(i,j)\in T} \nabla v_{i,j}\right)
\over
\sum_{T \in \ttt_{i_0}}\prod_{(i,j)\in T} W_{i,j} e^{\nabla u_{i,j}+\gamma \nabla v_{i,j}}
}\right)^2
\end{eqnarray*}
Hence,
$$
{\partial^2 \over \partial \gamma^2} \ln D(W,u+\gamma v)=\var_{\mmm(W,u+\gamma v)}\left(\sum_{(i,j)\in T} \nabla v_{i,j}\right)\ge 0,
$$
where $\mmm(W,u+\gamma v)$ is the probability on $\ttt_{0}$ defined as the law on random directed spanning trees oriented toward $i_0$ for the weights on directed edges 
$(W_{i,j}e^{\nabla u_{i,j}+\gamma\nabla v_{i,j}})_{(i,j)\in \vec{E}}$, and $\var_{\mmm(W,u+\gamma v)}$ is the associated variance.
\end{proof}
As a consequence, since $(1-{1\over q})\gamma+{1\over q} \gamma'=0$, we have
$$
(1-{1\over q})\ln D(W,u+\gamma v)+{1\over q} \ln D(W,u+\gamma' v)-\ln D(W,u)\ge 0.
$$
Hence by \eqref{3} and \eqref{ineg-1}
$$
\left({d\Q\over d\Q^\gamma}\right)^{q-1} \left( {d\Q\over d\Q^{\gamma'}}\right)(u)\le 
\exp\left( {q\over 2} \sum_{i\to j} W_{i,j} e^{\nabla u_{i,j}+\gamma' \nabla v_{i,j} }\left(2 q^2\gamma^2\vert\nabla v_{i,j}\vert^2
\right)
\right)
$$
Hence,
\begin{eqnarray*}
&&\E^{\Q^{\gamma'}}\left( \left({d\Q\over d\Q^\gamma}\right)^{q-1} \left( {d\Q\over d\Q^{\gamma'}}\right)\right)
\\
&\le&
c_V \int 
\exp\left( -\demi \sum_{i\to j} W_{i,j}\left(e^{\nabla u_{i,j}+\gamma' \nabla v_{i,j} }-1-2 q^3\gamma^2\vert\nabla v_{i,j}\vert^2 e^{\nabla u_{i,j}+\gamma' \nabla v_{i,j} }
\right)
\right)
\sqrt{D(W,u+\gamma' v)} du
\\
&\le&
e^{\sum_{i\to j} W_{i,j} q^3\gamma^2\vert\nabla v_{i,j}\vert^2}
c_V \int 
\exp\left( -\demi \sum_{i\to j} \tilde W_{i,j}
%(1-2 q^3\gamma^2\vert\nabla v_{i,j}\vert^2) 
\left(e^{\nabla u_{i,j}+\gamma' \nabla v_{i,j} }-1\right)
\right)
\sqrt{D(W,u+\gamma' v)} du
\end{eqnarray*}
with
$$
\tilde W_{i,j}:= W_{i,j}(1-2 q^3\gamma^2\vert\nabla v_{i,j}\vert^2)
$$
Remark that by assumption, we have $2 q^3\gamma^2\vert\nabla v_{i,j}\vert^2\le 2 (q^2\gamma\vert\nabla v_{i,j}\vert)^2\le \demi$, we have
$$
\tilde W_{i,j}\ge \demi W_{i,j}>0,
$$
hence the measure $\Q^{\tilde W}:=\Q^{\tilde W}_{i_0}$ defined by \eqref{00} with conductances $(\tilde W_{i,j})$ is well-defined as a probability.
Changing back to coordinate $\tilde u=u+\gamma' v$ we get that
\begin{eqnarray*}
&&\E^{\Q^{\gamma'}}\left( \left({d\Q\over d\Q^\gamma}\right)^{q-1} \left( {d\Q\over d\Q^{\gamma'}}\right)\right)
\\
&\le&
e^{\sum_{i\to j} W_{i,j} q^3\gamma^2\vert\nabla v_{i,j}\vert^2}
c_V \int 
\exp\left( -\demi \sum_{i\to j} \tilde W_{i,j}
%(1-2 q^3\gamma^2\vert\nabla v_{i,j}\vert^2) 
\left(e^{\nabla \tilde u_{i,j}}-1\right)
\right)
\sqrt{D(W,\tilde u)} d\tilde u
\\
&=&
e^{\sum_{i\to j} W_{i,j} q^3\gamma^2\vert\nabla v_{i,j}\vert^2}
\int 
\sqrt{
{D(W,\tilde u)
\over D(\tilde W,\tilde u)}
} 
\Q^{\tilde W}(d\tilde u)
\end{eqnarray*}
Now, since $2 q^3\gamma^2\vert\nabla v_{i,j}\vert^2\le \demi$ and $(1-h)^{-1}\le e^{2h}$ if $0\le h\le \demi$,
$$
{D(W,\tilde u)
\over D(\tilde W,\tilde u)}
\le 
\prod_{\{i,j\}\in E} (1-2 q^3\gamma^2\vert\nabla v_{i,j}\vert^2)^{-1}\le \exp\left( \sum_{\{i,j\}\in E} 2q^3\gamma^2\vert\nabla v_{i,j}\vert^2\right)
\le \exp\left( \sum_{i\to j} q^3\gamma^2\vert\nabla v_{i,j}\vert^2\right).
$$
%(Indeed, the sum is on directed edges in $\sum_{i\to j}$ and on non-directed edges in $\prod_{\{i,j\}\in E}$.)
It follows that
$$
\E^{\Q^{\gamma'}}\left( \left({d\Q\over d\Q^\gamma}\right)^{q-1} \left( {d\Q\over d\Q^{\gamma'}}\right)\right)^{1/q}
\le
\exp\left( \sum_{i\to j} (W_{i,j}+1) q^2\gamma^2\vert\nabla v_{i,j}\vert^2\right).
$$
Together with \eqref{1} and \eqref{2}, it concludes the proof of the lemma. 
%we get that 
%$$
%\E^\Q\left(e^{s u_y}\right)\le e^{-\gamma s+ \sum_{i\to j} \gamma^2 q^2 (W_{i,j}+1)\vert \nabla v_{i,j}\vert^2}.
%$$
%which concludes the proof of the lemma. 
%(Note by the way, that the fact that the rhs of \eqref{?} is finite is clear from the
\end{proof}
\subsection{Back to the $\Z^2$ lattice}
We assume in this section that the graph is the graph $\ggg_N=(\tilde V_N,\tilde E_N)$ defined in Section~\ref{main_results}. We will apply the previous lemma in the case where $i_0=0$ and $y\in V_N$.

The next step to conclude the proof of Theorem~\ref{main_theorem} is to construct a good function $v$ which satisfies the hypothesis of Lemma~\ref{Lemma1} and with a good control on its $l^2$ norm. We denote by $\eee$ the Dirichlet form on the graph $\ggg_N$ with conductances 1 defined for $f:\tilde V_N\mapsto \R$ by
$$
\eee(f,f)= \demi\sum_{i\to j} \vert \nabla f_{i,j}\vert^2.
$$
Let $v$ be the harmonic function between 0 and $y\in V_N$, $y\neq 0$, for constant conductances 1:
$$
\begin{cases}
v(0)=0,
\\
v(y)=1,
\\
\sum_{j, j\sim z}  \nabla v_{z,j}=0, \;\;\; \forall z \in V_N, \; z\neq 0, \; z\neq y.
\end{cases}
$$ 
By definition
$$
\eee(v,v)={1\over R(0,y)},
$$
where $R(0,y)$ is the equivalent resistance between $0$ and $y$ for the graph $\ggg_N$ with unit conductances.
Classically, by Nash-William criterion, there exists $c_0>0$, independent of $N$ and $y$, such that
$$
R(0,y)\ge c_0 \ln \vert y\vert_\infty,
$$
see e.g.  \cite{lyons-peres}, formula~(2.7) Section~2.4 taking the annuli between 0 and $y$ as cut-sets. (Note that we can take $c_0$ arbitrary close to $1/8$ for $\vert y\vert_{\infty}$ large enough, since \cite{lyons-peres}, formula~(2.7) implies that $R(0,y)\ge \sum_{k=1}^{\vert y\vert_\infty-1}{1\over 4(2k+1)}\sim {1\over 8} \ln \vert y\vert_\infty$). Moreover we have,
$$
\dive(\nabla v)(z)= {1\over R(0,y)}(\indic_{z=0}-\indic_{z=y}).
$$
where $\dive(\nabla v)$ is the divergence of $\nabla v$ defined by $\dive(\nabla v)(z)=\sum_{j, j\sim z}  \nabla v_{z,j}$. This implies that $R(0,y)\nabla v$ is a unit flow between 0 and $y$, in fact it is the current flow, see \cite{lyons-peres}~Section~2.4. In particular it implies, by \cite{lyons-peres} Proposition~2.2 and exercise~2.37,  that 
$$
R(0,y)\vert \nabla v_{i,j}\vert\le 1, \;\;\; \forall i\sim j.
$$
Take 
\begin{eqnarray}\label{condition}
\gamma= \tilde \gamma R(0,y), \;\;\; \hbox{ with }\;\;\; \tilde \gamma\le {1\over 2q^2},
\end{eqnarray}
$\tilde \gamma$ to be fixed later.
We have that 
$$\gamma q^2\vert \nabla v_{i,j}\vert \le \tilde  \gamma  q^2 (R(0,y)\vert \nabla v_{i,j}\vert)\le \demi,$$
and $v$ satisfies the hypothesis of Lemma~\ref{Lemma1}.
Hence, we can apply Lemma~\ref{Lemma1} to $\gamma$ and $v$: since $W_{i,j}\le \overline W$ for all $i\sim j$, we get
\begin{eqnarray*}
\E^\Q\left(e^{s u_y}\right)
\le
%&\le &
e^{-\gamma s+  2 \gamma^2 q^2 (\overline W+1)\eee(v,v)}
%\\
%&=&
=
e^{-R(0,y)\tilde \gamma s-2\tilde\gamma^2 R(0,y)^2q^2(\overline W+1)\eee(v,v)}
%\\
%&=&
=
e^{-R(0,y)(\tilde \gamma s-2\tilde\gamma^2 q^2(\overline W+1))},
\end{eqnarray*}
since $\eee(v,v)=1/R(0,y)$.
The infimum on $\tilde \gamma$ of the right-hand side is obtained for 
$$\tilde \gamma={s\over 4q^2(\overline W+1)}\le {1\over 2 q^2}.$$ 
Choosing $\tilde \gamma$ as above, it satisfies the condition \eqref{condition}, so that we get 
\begin{eqnarray}\label{explicite}
\E^\Q\left(e^{s u_y}\right)\le e^{-R(0,y){s^2\over 8 q^2(\overline W+1)}}\le e^{-{c_0 s^2\over 8 q^2(\overline W+1)}\ln\vert y\vert}.
\end{eqnarray}
Taking $\eta(s,\overline W):={c_0 s^2\over 8 q^2(\overline W+1)}$ concludes the proof of the lemma.
\begin{rmk}
Note that when $\overline W\to 0$, we cannot get an arbitrary large exponent $\eta(s,\overline W)$. This is rather surprising since, by a different argument, at small $\overline W$ it is known that the field is exponentially localized (see \cite{DS10}). The same phenomenon appears in the proof of Merkl and Rolles of the polynomial localisation of the mixing field of the ERRW (see \cite{merkl2009recurrence}), where a Mermin-Wagner argument is also used.  This is what prevented them to prove recurrence of the 2D-ERRW at strong disorder. Indeed, without extra considerations, one needs an exponent $\eta$ at least larger than 1 to get recurrence.
\end{rmk}

\hfill\break
\noindent
{\it 
{\bf Note}
Gady Kozma and Ron Peled also have a proof of a similar result, see forthcoming \cite{Kozma-Peled-19}. From recent discussion with them, we concluded that our two approaches are rather different. We thank them for communicating an early version of their manuscript. 
\hfill\break
{\bf Acknowledgement} We are grateful to Tyler Helmuth for pointing ref~\cite{McBryan-Spencer77}.}
\bibliography{bibi-Polynomial}
\bibliographystyle{plain}

\end{document}